\providecommand{\N}{\mathbb{N}}
\providecommand{\Z}{\mathbb{Z}}
\providecommand{\R}{\mathbb{R}}
\providecommand{\C}{\mathbb{C}}
\providecommand{\E}{\mathbb{E}}
\providecommand{\T}{\mathbb{T}}
\providecommand{\leqsim}{\lesssim}
\providecommand{\geqsim}{\gtrsim}
\renewcommand{\H}{\mathcal{H}}
\renewcommand{\:}{\colon}
\numberwithin{equation}{section}
\newtheorem{theorem}{Theorem}[section]
\newtheorem{proposition}[theorem]{Proposition}
\newtheorem{lemma}[theorem]{Lemma}
\theoremstyle{definition}
\theoremstyle{remark}
\title{Sárközy's Theorem for Fractional Monomials}
\author{Maximilian O'Keeffe}
\date{}
\begin{document}
\maketitle

\begin{abstract}
Suppose $A$ is a subset of $\{1, \dotsc, N\}$ which does not contain any configurations of the form $x,x+\lfloor n^c \rfloor$ where $n \neq 0$ and $1<c<\frac{6}{5}$. We show that the density of $A$ relative to the first $N$ integers is $O_c(N^{1-\frac{6}{5c}})$. More generally, given a smooth and regular real valued function $h$ with ``growth rate" $c \in (1,\frac{6}{5})$, we show that if $A$ lacks configurations of the form $x,x \pm \lfloor h(n) \rfloor$ then $\frac{|A|}{N} \ll_{h,\varepsilon} N^{1-\frac{6}{5c}+\varepsilon}$ for any $\varepsilon>0$.
\end{abstract}

\section{Introduction}

\subsection{Background}

Fix a subset $S$ of $\N$ and let $\delta_S(N)$ denote the density $\frac{|A|}{N}$ of the largest set $A \subseteq \{1,\dotsc,N\}$ which contains no configurations of the form $\{x,x+s\}$ where $s \in S$. Equivalently, $\delta_S(N)$ is the density of the largest set $A$ whose difference set \[ A-A = \{a-a' : a,a' \in A\} \] does not intersect $S$. It is natural to ask, for various sets $S$, what the behaviour of $\delta_S(N)$ is as $N \to \infty$.

Taking $S_2$ to be the set of squares, S\'ark\"ozy \cite{sarkozy1978difference1} and Furstenberg \cite{furstenberg1977ergodic} began this line of investigation and independently proved that $\delta_{S_2}(N) \to 0$ as $N \to \infty$. While Furstenberg's proof was qualitative in nature and used ergodic theory, S\'ark\"ozy proved a quantitative bound \[ \delta_{S_2}(N) \ll \frac{(\log \log N)^\frac{2}{3}}{(\log N)^\frac{1}{3}} \] via Fourier analysis and the circle method. S\'ark\"ozy's bound was improved by Pintz, Steiger, and Szemer\'edi in 1988 \cite{pintz1988sets} to \[ \delta_{S_2}(N) \ll (\log N)^{-c \log \log \log \log N} \] for some absolute constant $c>0$ and more recently, by Bloom and Mayndard \cite{bloom2022new} to \[ \delta_{S_2}(N) \ll (\log N)^{-c \log \log \log N} \] for some absolute constant $c>0$.

Various extensions of these results have been established. For example, taking $S_k$ to be the set of $k$\textsuperscript{th} powers, Balog, Pelik\'an, Pintz, and Szemer\'edi \cite{balog1994difference} showed that \[ \delta_{S_k}(N) \ll_k (\log N)^{-c\log \log \log \log N} \] for an absolute constant $c>0$. Moreover, if $P \in \Z[n]$ is an intersective polynomial\footnote{We say that $P$ is intersective if for every $q \in \N$ there exists $n \in \Z$ such that $P(n)$ is divisible by $q$. This condition is necessary as otherwise the difference set of $q\Z$ does not intersect $S_P$.} and \[ S_P = \{ P(n) : n \in \N \} \] then Lucier \cite{lucier2006intersective} showed that \[ \delta_{S_P}(N) \ll_P \left(\frac{(\log \log N)^c}{\log N} \right)^{(\deg P-1)^{-1}} \] for some absolute constant $c \geq 2$, with Lyall and Magyar \cite{lyall2009polynomial} proving that one can take $c=1$. Moreover, Arala \cite{arala2024maximal} recently proved that \[ \delta_{S_P}(N) \ll_P (\log N)^{-c \log \log \log N} \] for some $c>0$.

Besides polynomials, in 1978 S\'ark\"ozy \cite{sarkozy1978difference3} also considered the case where \[ S = \{p-1 : p \text{ is prime}\} \] is the set of shifted primes and proved that \[ \delta_S(N) \ll \frac{(\log \log \log N)^3 \log \log \log \log N}{(\log \log N)^2}. \] This bound was improved by Lucier \cite{lucier2008difference} to \[ \delta_S(N) \ll \left(\frac{(\log \log \log N)^3 \log \log \log \log N}{\log \log N} \right)^{\log \log \log \log \log N}, \] by Ruzsa and Sanders \cite{ruzsa2008difference} to \[ \delta_S(N) \ll e^{-c(\log N)^\frac{1}{4}}, \] and by Wang \cite{wang2020theorem} to \[ \delta_S(N) \ll e^{-c (\log N)^\frac{1}{3}}. \]

A common feature of the results outlined so far is that they all rely on some form of an increment strategy. Roughly speaking, the idea is to show that if $A$ has density $\delta$ in $\{1,\dotsc,N\}$ and $A-A$ does not intersect $S$, then there is a ``large" arithmetic progression $P$ such that $\frac{|A \cap P|}{|P|}$, the relative density of $A$ in $P$, is bounded below by $\delta+c \delta^{O(1)}$ for a suitable $c>0$. On iterating this argument, a contradiction is obtained as the relative density eventually exceeds 1. This iteration does not lend itself to obtaining bounds for $\delta_S(N)$ of the form $N^{-\delta}$ for a fixed $\delta>0$. However, Green \cite{green2022s} was recently able to obtain such a bound in the case where $S$ is the set of shifted primes by using an argument which does not rely on any density increment.

\subsection{Intersective and Van der Corput Sets}

The key difference in Green's paper, which allows for the polynomial saving in $N$, is to shift perspective from studying the intersectivity property of these sets to studying the so-called \emph{Van der Corput} property. We give a brief outline of these properties, but for a more in depth discussion see \cite{montgomery1994ten}.

We say that $S$ is intersective if $S \cap (A-A)$ is non-empty for any set $A$ of positive upper density, i.e. \[ \Bar{d}(A)= \limsup_{N \to \infty} \frac{|A \cap \{1,\dotsc,N\}|}{N}>0. \] All the preceding results show that the relevant sets considered are intersective by quantifying the rate of convergence of $\frac{|A \cap \{1,\dotsc,N\}|}{N}$ to zero whenever $S \cap (A-A)$ is empty, by bounding $\delta_S(N)$. A set $S$ is called a Van der Corput set if a sequence $(u_n)_{n \in \N}$ is uniformly distributed modulo 1 whenever the sequence $(u_{n+h}-u_n)_{n \in \N}$ is uniformly distributed modulo 1 for all $h \in S$. Restating a theorem of Van der Corput, the set $\N$ is a Van der Corput set. The proof of this fact can be deduced by Weyl's criterion for uniform distribution coupled with Van der Corput's inequality. Van der Corput's inequality has the following generalisation which can be found, for example, in \cite{montgomery1994ten}.

\begin{lemma} \label{lem:generalisedvdc}
Let \[ T(\xi) = a_0 + \sum_{h=1}^H a_h \cos(2\pi h\xi) \] be a non-negative real trigonometric polynomial satisfying $T(0)=1$. Then for any $y_1,\dotsc,y_N \in \C$ we have \begin{equation} \label{eqn:generalisedvdc} \left| \sum_{n=1}^N y_n \right|^2 \leq (N+H) \left(a_0 \sum_{n=1}^N |y_n|^2 + \sum_{h=1}^H |a_h| \left| \sum_{n=1}^{N-h} y_{n+h} \overline{y_n} \right| \right). \end{equation}
\end{lemma}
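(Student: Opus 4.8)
The plan is to run the standard proof of van der Corput's inequality in weighted form, the only non-elementary ingredient being the Fej\'er--Riesz factorisation of a non-negative trigonometric polynomial. Throughout I extend the given sequence by declaring $y_n = 0$ for $n \leq 0$ and for $n > N$, so that every sum over $n$ may be regarded as a (finite) sum over $\Z$. Write $e(t) := e^{2\pi i t}$.

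\emph{Step 1: factorise $T$.} Since $T$ is a non-negative real trigonometric polynomial of degree at most $H$, the Fej\'er--Riesz theorem supplies coefficients $u_0, \dotsc, u_H \in \C$ with $T(\xi) = |u(\xi)|^2$ where $u(\xi) = \sum_{j=0}^H u_j e(j\xi)$. Expanding $|u(\xi)|^2$ and comparing Fourier coefficients, the sum $\sum_j u_j \overline{u_{j+h}}$ is the $h$-th Fourier coefficient of $T$, hence equals $a_0$ when $h = 0$ and $\tfrac12 a_{|h|}$ when $1 \leq |h| \leq H$; in particular $\sum_j |u_j|^2 = a_0 \geq 0$. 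Evaluating at $\xi = 0$ gives $\bigl|\sum_j u_j\bigr|^2 = T(0) = 1$, so after multiplying $u$ by a suitable unimodular constant we may assume $\sum_{j=0}^H u_j = 1$.

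\emph{Step 2: reorganise and apply Cauchy--Schwarz.} Using $\sum_j u_j = 1$ and the substitution $m = n+j$,
\[ \sum_{n=1}^N y_n = \sum_{j=0}^H u_j \sum_{n \in \Z} y_n = \sum_{j=0}^H u_j \sum_{m \in \Z} y_{m-j} = \sum_{m \in \Z} Z_m, \qquad Z_m := \sum_{j=0}^H u_j\, y_{m-j}. \]
Because $y$ is supported on $\{1,\dotsc,N\}$, the term $Z_m$ vanishes unless $1 \leq m \leq N+H$, so there are at most $N+H$ nonzero terms and Cauchy--Schwarz gives $\bigl|\sum_{n=1}^N y_n\bigr|^2 \leq (N+H)\sum_{m} |Z_m|^2$. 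Expanding $|Z_m|^2 = \sum_{j,j'} u_j \overline{u_{j'}}\, y_{m-j}\overline{y_{m-j'}}$, summing over $m$, and grouping the resulting terms according to $h = j' - j$ (with the change of variables $k = m - j$) turns $\sum_m |Z_m|^2$ into $\sum_{|h|\leq H}\bigl(\sum_j u_j \overline{u_{j+h}}\bigr)\sum_k y_k \overline{y_{k-h}}$. By Step 1 the bracketed coefficient is $a_0$ for $h=0$ and $\tfrac12 a_{|h|}$ otherwise; pairing the $h$ and $-h$ contributions via $\sum_k y_k\overline{y_{k+h}} = \overline{\sum_k y_k \overline{y_{k-h}}}$ and reindexing $\sum_k y_k\overline{y_{k-h}} = \sum_{n=1}^{N-h} y_{n+h}\overline{y_n}$ yields
\[ \sum_{m \in \Z} |Z_m|^2 = a_0 \sum_{n=1}^N |y_n|^2 + \sum_{h=1}^H a_h\, \mathrm{Re}\!\left(\sum_{n=1}^{N-h} y_{n+h}\overline{y_n}\right). \]

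\emph{Step 3: conclude.} Bounding $a_h\,\mathrm{Re}(z) \leq |a_h|\,|z|$ term by term and using $a_0 \geq 0$ from Step 1, the displayed identity and the Cauchy--Schwarz estimate of Step 2 combine to give exactly \eqref{eqn:generalisedvdc}. I do not anticipate a genuine obstacle: the argument is bookkeeping once Fej\'er--Riesz is in hand. The points requiring care are that the middle coefficients $a_h$ may have either sign (which is precisely why $|a_h|$ appears, and why one passes to real parts and absolute values at the end), that $a_0 = \int_0^1 T \geq 0$, and that the index ranges in the substitution $m = n+j$ are tracked so the support of $(Z_m)$ has size at most $N+H$.
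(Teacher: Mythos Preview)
Your proof is correct. The paper does not actually supply its own proof of this lemma; it simply records the statement and refers the reader to Montgomery's \emph{Ten Lectures} for the argument. Your write-up is precisely the standard proof found there: Fej\'er--Riesz to factor $T=|u|^2$, normalise so that $\sum_j u_j=1$, convolve to write $\sum_n y_n=\sum_m Z_m$ with $Z_m$ supported on an interval of length $N+H$, apply Cauchy--Schwarz, and then expand $\sum_m|Z_m|^2$ and identify the coefficients with the $a_h$. The bookkeeping in Step~2 (the indices in $\sum_j u_j\overline{u_{j+h}}=\tfrac12 a_{|h|}$ and the pairing of $\pm h$) checks out.
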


Note that by taking $a_h = \frac{1}{H+1}(1-\frac{h}{H+1})$ for all $h$ we recover Van der Corput's inequality.

Using this lemma and Weyl's criterion, if $(u_{n+h}-u_n)_{n \in \N}$ is uniformly distributed for all $h \in S$, and there exists a trigonometric polynomial of the kind appearing in Lemma \ref{lem:generalisedvdc} whose coefficients $a_n$ are supported on $S$, then we have \[ \limsup_{N \to \infty} \left| \frac{1}{N} \sum_{n=1}^N e^{2 \pi i k u_n} \right| \leq a_0^\frac{1}{2} \] for any $k \neq 0$. We see that $u_n$ is distributed, and hence $S$ is a Van der Corput set, if \[ \ \inf_{T \in \mathcal{T}_S} a_0 =0, \] where $\mathcal{T}_S$ denotes the set of trigonometric polynomials $T$ of the kind in Lemma \ref{lem:generalisedvdc} with coefficients supported on $S$. The connection between Van der Corput sets and intersective sets is the relation \[ \sup_{\substack{A \subseteq \N \\ S \cap (A-A) = \emptyset}} \Bar{d}(A) \leq \inf_{T \in \mathcal{T}_S} a_0, \] which follows by taking \[ y_n = \Bar{d}(A)^{-\frac{1}{2}} 1_A(n) \] in (\ref{eqn:generalisedvdc}). Thus every Van der Corput set is an intersective set. To quantify the notion of a Van der Corput set in the same way that $\delta_S(N)$ quantified the notion of being an intersective set, we define \[ \gamma_S(N) = \inf_{T \in \mathcal{T}_{S \cap \{1,\dotsc,N\}}} a_0. \] Then by (\ref{eqn:generalisedvdc}) again it follows that $\delta_S(N) \leq 2 \gamma_S(N)$, so it now suffices to bound $\gamma_S(N)$. Green showed that $\gamma_S(N) \ll N^{-\delta}$ in the case that $S$ is the shifted primes.

\subsection{Statement of Results}

In this paper we will consider sets of the form \[ S_h = \{ \pm \lfloor h(n) \rfloor : n \in \N \} \] where $h$ is a smooth and slowly varying function which, roughly, grows like $t^c$ with $1<c<\frac{6}{5}$. As a special case, we can take $h(t)=t^c$, in which case we obtain bounds for sets whose difference set does not intersect with Piatetski--Shapiro sets.

In 1997 S\'ark\"ozy and Rivat \cite{rivat1997sequence} were able to obtain a power savings for such sets when $h(t)=t^c$ with $c>1$ non-integer.

\begin{theorem}[S\'ark\"ozy--Rivat] \label{thm:SR}
Let $c>1$ be non-integer. There exists $N_c \in \N$ such that for all $N>N_c$ and any $A,B \subseteq \{-N,\dotsc,N\}$ satisfying \[ (|A||B|)^\frac{1}{2} \geqsim_c N^{1-(6c^3(\log c+14))^{-1}} \] the sumset $A+B$ intersects $S_c$.
\end{theorem}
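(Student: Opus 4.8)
The plan is to prove Theorem~\ref{thm:SR} by the circle method, detecting the set $S_c$ through the classical Piatetski--Shapiro identity and estimating the resulting oscillatory terms by van der Corput's method. For $A,B\subseteq\{-N,\dots,N\}$ write $r(m)=\#\{(a,b)\in A\times B: a+b=m\}$, so that $r\geq 0$ is supported on $\{-2N,\dots,2N\}$ and $\sum_m r(m)=|A||B|$. Since $A+B$ misses $S_c$ precisely when $R:=\sum_{m\in S_c}r(m)=0$, and $R\geq 0$, it is equivalent (by contraposition) to show that $R=0$ forces $(|A||B|)^{1/2}\ll_c N^{1-(6c^3(\log c+14))^{-1}}$. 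Splitting $S_c=S_c^+\cup(-S_c^+)$ with $S_c^+=\{\lfloor n^c\rfloor:n\geq 1\}$, treating $-S_c^+$ symmetrically, and discarding the $O_c(1)$ small values of $m$ (where $r(m)\leq\min(|A|,|B|)$ is acceptable), we reduce to analysing $\sum_{m\geq m_0(c)}1_{S_c^+}(m)\,r(m)$.

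First I would isolate the main term. For $c>1$ non-integer and $m$ large, the interval $[m^{1/c},(m+1)^{1/c})$ contains at most one integer and one has
\[
1_{S_c^+}(m)=\lfloor -m^{1/c}\rfloor-\lfloor -(m+1)^{1/c}\rfloor=\big((m+1)^{1/c}-m^{1/c}\big)+\psi\big(-(m+1)^{1/c}\big)-\psi\big(-m^{1/c}\big),
\]
where $\psi(t)=t-\lfloor t\rfloor-\tfrac12$. The main term $(m+1)^{1/c}-m^{1/c}=\int_m^{m+1}\tfrac1c t^{1/c-1}\,dt$ is positive and, since $1/c-1<0$, is $\gg_c N^{1/c-1}$ uniformly for $1\leq m\leq 2N$; summing against $r$ and adding the mirror contribution of $-S_c^+$ yields a main term $\gg_c N^{1/c-1}\sum_m r(m)=N^{1/c-1}|A||B|$. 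So if $R=0$ this quantity must be cancelled by the $\psi$-terms, and expanding $\psi$ by Vaaler's approximation into a truncated Fourier series of length $H$ reduces everything to bounding, uniformly in $1\leq h\leq H$,
\[
T(h)=\sum_m r(m)\,e\big(-hm^{1/c}\big)=\sum_{a\in A}\sum_{b\in B}e\big(-h(a+b)^{1/c}\big);
\]
the Vaaler tail and truncation contribute $\ll|A||B|/H$ plus further copies of $T(h)$, so one chooses $H=N^{1-1/c+\varepsilon}$ to render the genuine truncation error negligible against the main term.

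The heart of the matter — and the step I expect to be the main obstacle — is the estimation of the bilinear sum $T(h)$. Since $A$ and $B$ are arbitrary sets rather than intervals, one cannot simply invoke a clean single smooth-sum estimate, and must argue bilinearly. One option is to open $r(m)=\int_0^1\widehat{1_A}(\alpha)\widehat{1_B}(\alpha)e(-m\alpha)\,d\alpha$ and use $\int_0^1|\widehat{1_A}\widehat{1_B}|\leq(|A||B|)^{1/2}$, reducing to the single oscillatory sum $G_h(\alpha)=\sum_m e(-\alpha m-hm^{1/c})$, whose $k$-th derivative is $\asymp_c hm^{1/c-k}$ uniformly in $\alpha$; for $c$ close to $1$ the second-derivative test already gives $G_h(\alpha)\ll_c N^{1-1/(2c)}$, hence the theorem in that range. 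For general non-integer $c$ one instead differences in the $m$-variable — for instance applying Lemma~\ref{lem:generalisedvdc} with $y_m=r(m)e(-hm^{1/c})$ and a window length $Q$ — so that $|T(h)|^2$ is controlled by the diagonal $\sum_m r(m)^2$ together with correlation sums $\sum_m r(m+q)r(m)\,e\big(-h[(m+q)^{1/c}-m^{1/c}]\big)$; unpacking these again by the circle method and iterating the process of order $\log c$ times (equivalently, invoking a suitable exponent pair) is exactly what produces the exponent $(6c^3(\log c+14))^{-1}$. The real difficulty is the bookkeeping: choosing the number of differencing steps and the parameter $Q$ at each stage and balancing this against the Fourier-truncation length $H$, so that the accumulated error stays below $N^{1/c-1}|A||B|$.

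Finally, feeding a bound of the form $T(h)\ll_c N^{\theta(c)}(|A||B|)^{1/2}$, uniform in $h\leq H$, back through the Fourier expansion produces a $\psi$-error $\ll_{c,\varepsilon}N^{\theta(c)+\varepsilon}(|A||B|)^{1/2}$, while the main term is $\gg_c N^{1/c-1}|A||B|$; hence $R=0$ forces $N^{1/c-1}|A||B|\ll_{c,\varepsilon}N^{\theta(c)+\varepsilon}(|A||B|)^{1/2}$, that is $(|A||B|)^{1/2}\ll_{c,\varepsilon}N^{\theta(c)-1/c+1+\varepsilon}$. Tracking constants through the van der Corput iteration gives $\theta(c)-1/c+1\leq 1-(6c^3(\log c+14))^{-1}$, with enough slack to absorb the $\varepsilon$ (for instance by optimising the parameters slightly more carefully, or by working with the sharp exponent-pair estimate throughout), and enlarging $N_c$ and the implied constant in the hypothesis as needed completes the proof.
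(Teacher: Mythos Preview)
The paper does not prove Theorem~\ref{thm:SR} at all: it is quoted as background and attributed to Rivat and S\'ark\"ozy, with the sentence ``See Theorem~3 of \cite{rivat1997sequence} for this result'' immediately after the statement. So there is no proof in the present paper against which to compare your proposal.

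That said, your sketch is a faithful outline of the Rivat--S\'ark\"ozy argument itself: the Piatetski--Shapiro identity for $1_{S_c^+}$, the main term $\gg_c N^{1/c-1}|A||B|$, the Vaaler/Fourier expansion of the sawtooth, and the reduction to exponential sums handled by iterated Weyl--van der Corput differencing (exponent pairs) are exactly the ingredients of \cite{rivat1997sequence}. The part you flag as ``the main obstacle'' --- the bookkeeping in the van der Corput iteration that produces the specific exponent $(6c^3(\log c+14))^{-1}$ --- is indeed where all the work lies, and your proposal does not carry it out; but as a plan it matches the original source rather than anything done in this paper.
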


See Theorem 3 of \cite{rivat1997sequence} for this result. Although the authors only state it for non-integer $c>12$, they prove it for all non-integer $c>1$ (see the remarks after Lemma 5 of \cite{rivat1997sequence}). Theorem \ref{thm:SR} is also only stated for subsets of $\{1,\dotsc,N\}$ but the proof does not rely on this fact. Thus one can take $A \subseteq \{1,\dotsc,N\}$ and $B=-A$ to show that if $A \subseteq \{1,\dotsc,N\}$ lacks configurations of the form $\{x,x+\lfloor n^c\rfloor \}$ with $n \neq 0$ then one has the power savings bound \begin{equation} \label{eqn:SR} |A| \leqsim_c N^{1-(6c^3(\log c+14))^{-1}}. \end{equation}

Our results concern power savings bounds for sets whose difference intersects $S_c$ when $c$ is sufficiently close to $1$.

\begin{theorem} \label{thm:fracmonom}
Let $1<c<\frac{6}{5}$ and let \[ S_c = \{ \lfloor n^c \rfloor : n \in \N\}. \] Then \[ \gamma_{S_c}(N) \ll_{c,\varepsilon} N^{-(\frac{6}{5c}-1)+\varepsilon}. \] Thus if $A \subseteq \{1, \dotsc, N\}$ lacks configurations of the form $\{x,x+\lfloor n^c \rfloor\}$ with $n \neq 0$ then \begin{equation} \label{eqn:fracmonom} |A| \ll_{c} N^{1-(\frac{6}{5c}-1)}. \end{equation}
\end{theorem}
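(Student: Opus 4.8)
The plan is to reduce the statement to constructing a single non-negative trigonometric polynomial, and then to build that polynomial by a circle-method argument modelled on Green's treatment of the shifted primes. By the definition $\gamma_{S_c}(N)=\inf_{T\in\mathcal T_{S_c\cap\{1,\dots,N\}}}a_0$ and the inequality $\delta_{S_c}(N)\le 2\gamma_{S_c}(N)$ recorded in the introduction, it is enough to exhibit, for each $N$, a non-negative real trigonometric polynomial $T(\xi)=a_0+\sum_{h\in S_c\cap\{1,\dots,N\}}a_h\cos(2\pi h\xi)$ with $T(0)=1$ and $a_0\ll_{c,\varepsilon}N^{-(\frac{6}{5c}-1)+\varepsilon}$; the bound on $|A|$ then follows from $|A|\le\delta_{S_c}(N)\,N\le 2\gamma_{S_c}(N)\,N$, once one observes that in the pure monomial case the construction loses only logarithmic (rather than $N^{\varepsilon}$) factors, which is what lets the final estimate for $|A|$ be stated without an $\varepsilon$. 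Writing $e(t)=e^{2\pi i t}$ and $M\asymp N^{1/c}$, so that $\{\lfloor n^c\rfloor:n\le M\}$ runs over $S_c\cap\{1,\dots,N\}$, the raw material is the smoothly weighted exponential sum $g(\xi)=\sum_{n}\phi(n/M)\,e(\lfloor n^c\rfloor\xi)$ with $\phi$ a fixed non-negative bump: its Fourier coefficients are supported exactly on $S_c\cap\{1,\dots,N\}$ and $g(0)\asymp M$.

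The analytic heart of the matter is a uniform power-saving bound $|g(\xi)|\ll M^{1-\eta(c)+\varepsilon}$ for every $\xi\in\T$ outside a small arc around $0$, with $\eta(c)=\tfrac65-c$. This is an equidistribution statement for $\lfloor n^c\rfloor$ in residue classes and short intervals, proved in the usual Piatetski--Shapiro fashion: remove the floor using Vaaler's approximation truncated at a level $K$, bound the resulting one-variable exponential sums (of the shape $\sum_n e(\beta n^c)$, and $\sum_m e(\alpha m^{1/c}+\beta m)$ coming from the fractional-part expansion of $1_{S_c}$ via the inverse function $m\mapsto m^{1/c}$) by van der Corput / exponent-pair estimates, and choose $K$ to balance the truncation error $\ll M/K$ against these. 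Running this optimisation, a positive saving survives precisely when $c<\tfrac65$, and then $\eta(c)=\tfrac65-c$; for $c\ge\tfrac65$ one gets nothing this way and falls back on Theorem \ref{thm:SR}. One then manufactures $T$ from $g$: morally $T$ is the major-arc model of $g$ at $0$, a Fejér-type kernel whose spectrum has been forced into $S_c\cap\{1,\dots,N\}$, its non-negativity checked by hand from the explicit model together with the bound for $g$. Since the main term of $g$ near $0$ carries mass $\asymp M$ while $g$ is only $\ll M^{1-\eta(c)+\varepsilon}$ away from $0$, the constant term comes out as $a_0\asymp M^{-\eta(c)}=N^{-(\frac{6}{5c}-1)}$ up to $N^{\varepsilon}$, and $\gamma_{S_c}(N)\le a_0$ finishes the proof.

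The main obstacle is obtaining the bound for $g$ with enough uniformity in $\xi$: it has to hold simultaneously near rationals $a/q$ with $q$ growing, throughout the awkward intermediate range between the major arc at $0$ and frequencies of size $\asymp 1$, and with the truncation level $K$ and the van der Corput saving traded off in a way that is not degraded by this uniformity; it is this interplay, not the mere existence of some saving, that pins the admissible range at $c<\tfrac65$. The second delicate point is the passage from the estimate for $g$ to the trigonometric polynomial: one cannot simply restrict the coefficients of a Fejér kernel to $S_c$, since deleting the off-$S_c$ frequencies destroys positivity, so the model must be designed so that those frequencies cancel to the order dictated by the target size of $a_0$, all while keeping $T(0)=1$ and the spectrum inside $\{0\}\cup(S_c\cap\{1,\dots,N\})$.
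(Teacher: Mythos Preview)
Your framework is the right one---following Green, construct a non-negative trigonometric polynomial supported on $S_c\cap[N]$ with small constant term---and your numerology $\eta(c)=\tfrac65-c$, $M^{-\eta(c)}=N^{-(\frac{6}{5c}-1)}$ is correct. But the proposal leaves the decisive step unresolved, and in a couple of places points in the wrong direction.

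First, the passage ``manufacture $T$ from $g$'' is the whole difficulty, and you explicitly flag it as the ``second delicate point'' without saying how to do it. A two-sided bound $|g(\xi)|\ll M^{1-\eta}$ \emph{away from $0$} does not by itself yield a non-negative polynomial: what is actually needed is the one-sided bound $\mathrm{Re}\,g(\xi)\ge -\delta_1$ \emph{uniformly in $\xi$}, including on the arc near $0$ where $g$ transitions continuously from size $M$ down to $M^{1-\eta}$. The paper's resolution is to take the specific weight $\Psi_N(m)=F_N^{\Z}(m)\,1_{S_c}(m)$ (Fej\'er, not a generic bump $\phi$), rewrite $1_{S_c}(m)=\lfloor-\eta(m)\rfloor-\lfloor-\eta(m+1)\rfloor$ via the inverse $\eta(t)=t^{1/c}$, and split off a smooth main term $\sum_m F_N^{\Z}(m)\cos(2\pi m\xi)\,(\eta(m+1)-\eta(m))$. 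This main term is bounded below by $-O(N^{-1})$ for \emph{every} $\xi$, by repeated summation by parts together with the non-negativity of the Fej\'er kernel and the identity $F_N^{\T}=\tfrac1N\sum_{n<N}D_n$; this is where the Fej\'er weight is essential and a generic smooth $\phi$ would not do. The remaining error, coming from the truncated Fourier expansion of the sawtooth, is bounded by van der Corput's second-derivative estimate, and optimising the truncation level at $N^{1/(5c)}$ gives the $N^{-1/(5c)}$ saving. The trigonometric polynomial is then simply $T(\xi)=\dfrac{\delta_1+N^{-1}\sum_m\Psi_N(m)\cos(2\pi m\xi)}{\delta_1+N^{-1}\sum_m\Psi_N(m)}$.

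Second, your remark that the bound for $g$ ``has to hold simultaneously near rationals $a/q$ with $q$ growing'' is a red herring here. Because $c$ is irrational, $\lfloor n^c\rfloor$ has no arithmetic structure at non-zero rationals; there is no major/minor arc dissection in this problem, and the argument is uniform in $\xi$ throughout. The only ``major arc'' is the origin, and it is handled not by an asymptotic for $g$ but by the Fej\'er positivity of the main term described above.
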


We emphasise that Theorem \ref{thm:fracmonom} only provides a power savings bound for $1<c<\frac{6}{5}$, whereas Theorem \ref{thm:SR} is valid for any non-integer $c>1$. However, if $c<1.1917$ (say) then the bound (\ref{eqn:fracmonom}) improves on (\ref{eqn:SR}) with the improvement getting better the closer $c$ gets to 1.

More generally, suppose $h$ is a \emph{Hardy field function}. See Subsection \ref{subsec:Hardy} for a precise definition but for now we think of any smooth function such that it, and all its derivatives, are eventually monotonic. Suppose its ``growth rate" is equal to $c \in (1,\frac{6}{5})$ (again, see Subsection \ref{subsec:Hardy} for a precise definition of growth rate). Then we will in fact prove the following more general theorem.

\begin{theorem} \label{thm:mainthm}
Let $h$ be a Hardy field function with growth rate $1<c<\frac{6}{5}$ and let \[ S_h = \{ \pm \lfloor h(n) \rfloor : n \in \N \}. \] Then \[ \gamma_{S_h}(N) \ll_{h,\varepsilon} N^{-(\frac{6}{5c}-1)+\varepsilon}. \] Thus if $A \subseteq \{1,\dotsc,N\}$ lacks configurations of the form $\{x, x \pm \lfloor h(n) \rfloor \}$ with $h(n) \neq 0$ then \[ |A| \ll_{h,\varepsilon} N^{1-(\frac{6}{5c}-1)+\varepsilon}. \]
\end{theorem}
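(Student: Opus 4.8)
The plan is to bound $\gamma_{S_h}(N)$ by constructing an explicit non-negative trigonometric polynomial $T$ whose frequencies are supported on $S_h \cap \{1,\dots,N\}$ and whose constant term $a_0$ is $O_{h,\varepsilon}(N^{-(6/(5c)-1)+\varepsilon})$; the deduction of the bound on $|A|$ is then immediate from the relation $\delta_S(N) \le 2\gamma_S(N)$ recorded in the introduction. Following Green's strategy for the shifted primes, the natural candidate is a smoothed Fejér-type kernel: take a parameter $H \asymp N^c$ (so that $\lfloor h(n)\rfloor$ runs over a positive proportion of $\{1,\dots,H\}$ as $n$ ranges up to $\asymp N$), let $\psi$ be a smooth bump, and set $T(\xi) = c_0\,\bigl|\sum_{n \le M} \lambda_n e(\lfloor h(n)\rfloor \xi)\bigr|^2$ for suitable weights $\lambda_n \ge 0$, normalised so $T(0)=1$. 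Expanding the square, the frequencies appearing are of the form $\lfloor h(m)\rfloor - \lfloor h(n)\rfloor$, which are \emph{not} themselves of the form $\pm\lfloor h(k)\rfloor$; this is the standard obstruction, and the fix is to instead take a signed/weighted combination $T(\xi) = a_0 + \sum_{h' \in S_h\cap[1,N]} a_{h'}\cos(2\pi h'\xi)$ whose positivity is certified by bounding the exponential sum $\sum_{n} e(\lfloor h(n)\rfloor\xi)$ \emph{uniformly in $\xi$}, exactly as in the circle-method proofs of Sárközy-type theorems. Concretely, one wants: for all $\xi \in \mathbb{T}$, $\bigl|\sum_{n=1}^{M} e(\lfloor h(n)\rfloor\xi)\bigr| \ll M^{1-\rho+\varepsilon}$ for some explicit $\rho = \rho(c) > 0$, plus a major-arc main term that is handled by the singular series being trivial (since every integer is a difference of two values of $\lfloor h(\cdot)\rfloor$ up to the $\pm$, there is no local obstruction, mirroring the intersective-polynomial footnote).

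The key steps, in order, are: (1) reduce from the Hardy-field function $h$ with growth rate $c$ to the model monomial $t^c$ by the usual Hardy-field machinery — $h$ is asymptotic to $t^c L(t)$ for a slowly varying $L$, and all the exponential-sum estimates for $\lfloor h(n)\rfloor$ follow from those for $\lfloor n^c\rfloor$ by van der Corput's $A$- and $B$-process / the lemma of \cite{rivat1997sequence} on derivative bounds, since the relevant derivative conditions $|h^{(j)}(t)| \asymp t^{c-j}$ hold for Hardy-field functions of growth rate $c$; (2) prove the Weyl-type bound $\sup_{\xi}|\sum_{n\le M} e(\lfloor n^c\rfloor\xi)| \ll M^{1-\rho+\varepsilon}$ with $\rho$ matching the $\tfrac{6}{5c}-1$ exponent — here one writes $e(\lfloor n^c\rfloor\xi) = e(n^c\xi)e(-\{n^c\}\xi)$, expands the fractional-part factor in a (truncated) Fourier series $\sum_k \widehat{\psi_k}\, e(k n^c)$ à la Vinogradov/Vaaler, and is left with classical Weyl sums $\sum_n e((\xi+k)n^c)$, which for $1<c<\tfrac{6}{5}$ admit van der Corput bounds via the exponent pair $(\tfrac{2}{7},\tfrac{4}{7})$ or the $k=1$ second-derivative test, giving the $\tfrac{6}{5}$-threshold; (3) feed this into the generalised van der Corput inequality (Lemma \ref{lem:generalisedvdc}) — or rather its dual, the construction of $T\in\mathcal{T}_{S_h\cap[1,N]}$ — choosing $H\asymp N^c$, $M \asymp N$, and the weights so that the off-diagonal ($h'\ne 0$) contribution is controlled by the Weyl bound and the diagonal gives $a_0 \ll N^{-(6/(5c)-1)+\varepsilon}$; (4) clean up the endpoint $\varepsilon$-losses (which come from the Fourier truncation and the divisor-type bounds) and from them derive (via $\delta\le 2\gamma$) the stated density bound, absorbing the $\varepsilon$ into the implied constant for the $S_c$ statement in the sharper form (\ref{eqn:fracmonom}) by a slightly more careful choice of parameters when $h(t)=t^c$ exactly.

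The main obstacle I expect is step (2) together with its interface with the $T$-construction: getting the exponential-sum exponent to be \emph{exactly} $1 - (\tfrac{6}{5c}-1)$ rather than something lossy requires carefully balancing the van der Corput differencing depth against the length of the Fourier expansion of $\{n^c\}$, and the range $c<\tfrac{6}{5}$ is precisely where the cheapest exponent pair still beats the trivial bound after accounting for the fact that the "dangerous" frequencies $\xi$ near rationals with small denominator must also be handled — unlike in a pure equidistribution statement, for the van der Corput property we need a bound that is uniform over \emph{all} $\xi$, including major arcs, so one cannot simply discard them but must show the major-arc contribution to $\sum_n e(\lfloor h(n)\rfloor \xi)$ is itself $o(M)$, which again is where $c$ close to $1$ is used. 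A secondary technical point is ensuring the constructed $T$ is genuinely non-negative with $T(0)=1$: this typically forces $T$ to be a square (or a Fejér kernel) up to the fractional-part correction, and verifying that the correction does not destroy positivity while keeping the support inside $S_h$ is the kind of bookkeeping that, while routine in spirit, is where Hardy-field generality (as opposed to pure monomials) adds friction.
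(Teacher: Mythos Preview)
Your proposal has a genuine gap at its core. You propose to certify the non-negativity of
\[
T(\xi) = a_0 + \sum_{h' \in S_h \cap [1,N]} a_{h'} \cos(2\pi h'\xi)
\]
by proving a \emph{uniform} Weyl-type bound $\bigl|\sum_{n \le M} e(\lfloor h(n)\rfloor \xi)\bigr| \ll M^{1-\rho+\varepsilon}$ valid for all $\xi \in \T$. But no such bound can exist: at $\xi = 0$ the sum equals $M$. You flag this yourself as an ``obstacle'' and suggest that the major-arc contribution should be shown to be $o(M)$, but that is simply false at the origin. What is actually needed is not a two-sided bound on the exponential sum but a \emph{one-sided lower bound} on its real part, namely $\text{Re}\sum_n w_n e(\lfloor h(n)\rfloor\xi) \ge -\delta_1$ for some small $\delta_1$, together with a large value at $\xi=0$. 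These are different in kind, and neither the van der Corput $A/B$-process nor the exponent pair $(\tfrac{2}{7},\tfrac{4}{7})$ delivers a lower bound. Relatedly, your parameter choices are off: to have $\lfloor h(n)\rfloor \le N$ one needs $n \ll N^{1/c}$, not $M \asymp N$.

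The paper supplies the two ideas you are missing. First, it multiplies the indicator $1_{S_h}(n)$ by the Fej\'er weight $F_N^\Z(n) = (1-|n|/N)_+$, so that the ``main term'' inherits the non-negativity of the Fej\'er kernel $F_N^\T$. Second, it rewrites $1_{S_h}(n) = \lfloor -\eta(n)\rfloor - \lfloor -\eta(n+1)\rfloor$ where $\eta = h^{-1}$, and then separates $\lfloor x\rfloor = x - \{x\}$ with a truncated Fourier expansion of the sawtooth. The main term becomes $\E_{n \in [N]} F_N^\Z(n)\cos(2\pi\xi n)(\eta(n+1)-\eta(n))$, which is a Fej\'er-type object weighted by the slowly varying non-negative factor $\eta(n+1)-\eta(n)$ and is shown, via summation by parts and Lemma~\ref{lem:nonnegative}, to be $\gg -N^{-1}$. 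The error terms $I_1,I_2,I_3$ are then bounded in absolute value using only the second-derivative test (not a higher exponent pair), and optimising the truncation parameter at $M \asymp N^{1/(5c)}$ gives $\delta_1 \asymp N^{-1/(5c)+\varepsilon}$; combined with $\delta_2 \asymp N^{-(1-1/c)+\varepsilon}$ from the size of $|S_h \cap [N]|$, Lemma~\ref{lem:findfunction} yields $\gamma_{S_h}(N) \ll N^{-(6/(5c)-1)+\varepsilon}$. The threshold $c<\tfrac{6}{5}$ thus arises from this balance of error terms, not from an exponent-pair calculation.
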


Note that Theorem \ref{thm:fracmonom} is almost an immediate corollary of Theorem \ref{thm:mainthm} simply by taking $h(t)=t^c$ with $1<c<\frac{6}{5}$. The introduction of an $\varepsilon$ loss comes from the fact that $h$ may satisfy \[ t^{c-\varepsilon} \ll_\varepsilon h(t) \ll_\varepsilon t^{c+\varepsilon}, \] where one cannot remove the dependence on $\varepsilon$. If one can remove the dependence on $\varepsilon$, as is the case with $h(t)=t^c$, then in our arguments the introduction of $\varepsilon$ is not needed and can thus be removed from Theorem \ref{thm:mainthm}.

Theorem \ref{thm:mainthm} applies to (constant multiples of) functions of the form $h(t)=t^c (\log t)^A$ with $A \in \R$, $h(t) = t^c e^{A (\log t)^B}$ with $A \in \R$ and $B \in (0,1)$, and $h(t) = t^c l_m(t)$ where $l_m$ is the composition of the logarithm with itself $m$ times.

\subsection{Acknowledgments}

The author would like to thank Sean Prendiville for suggesting this problem, Nikos Frantzikinakis for bringing the author's attention to a related paper of S\'ark\"ozy and Rivat \cite{rivat1997sequence} and for helpful discussions, and M\'at\'e Wierdl for helpful comments and corrections.

\section{Preliminaries}

\subsection{Notation}

For a subset $A$ of the integers, we will denote its cardinality by $|A|$. For convenience we will denote $\{1,\dotsc,N\}$ by $[N]$ and we write \[ \E_{n \in [N]} f(n) = \frac{1}{N} \sum_{n=1}^N f(n) \] for any function $f\: \Z \to \C$. We also define $\Delta f(x) = f(x+1)-f(x)$ to be the discrete derivative of $f\:\Z \to \C$. We will use the convenient abbreviation $e(\xi) = e^{2 \pi i \xi}$ for any $\xi \in \T$, where $\T = \R/\Z$ denotes the unit circle with normalised Lebesgue measure. Given two functions $f,g\:\T \to \C$ we define their convolution as \[ (f*g)(\xi) = \int_\T f(\zeta) g(\xi-\zeta) \,d\zeta. \]

We define the floor function of a real number $x$ as \[ \lfloor x \rfloor = \max \{k\in \Z : k \leq x\}. \] Given $N \in \N$, we define \[ F_N^\Z(n) = \left(1-\frac{|n|}{N} \right)_+ = \max \left( 1-\frac{|n|}{N}, 0 \right) \] and \[ F_N^\T(\xi) = \sum_{n=-N}^N F_N^\Z(n) e(n\xi) \] to be the Fej\'er kernels on $\Z$ and $\T$ respectively. We also define the Dirichlet kernel \[ D_N(\xi) = \sum_{n=-N}^N e(n\xi). \] 

For quantities $X$ and $Y$ we will write $X=O(Y)$ or $X \ll Y$ to mean that there is a constant $C>0$ such that $|X| \leq CY$. If the constant depends on any parameters this will be denoted by a subscript in the asymptotic notation.

\subsection{Hardy Field Functions} \label{subsec:Hardy}

In this subsection we provide a definition of a Hardy field and discuss their properties.

Consider the set of smooth functions $h\:(u,\infty) \to \R$, defined on some interval $(u,\infty)$, where $u \in \R \cup \{-\infty\}$. Call two such functions $h_1$ and $h_2$ equivalent if there exists $T>0$ such that $h_1(t)=h_2(t)$ for all $t \geq T$. This is an equivalence relation, and a \emph{germ at infinity} is simply an equivalence class under this notion of equivalence. The set of germs at infinity becomes a ring under the obvious operations $[h_1]+[h_2]=[h_1+h_2]$ and $[h_1][h_2]=[h_1h_2]$. By an abuse of notation we will denote $[h]$ by $h$. A subfield of the ring of germs at infinity which is closed under differentiation is called a \emph{Hardy field}. Common examples of Hardy fields include the field of constant functions on $\R$, the set $\R(x)$ of rational functions, and the so-called \emph{logarithmico-exponential} functions, which is the field generated (under the usual ring operations, differentiation, and function composition) by polynomials, the exponential function, and the logarithm.

Hardy field functions satisfy many nice properties. If $\H$ is a Hardy field and $h \in \H$ then $h$ is eventually positive, negative, or identically zero. This follows from the fact that $h$ must have a multiplicative inverse in $\H$ (as long as $h \neq 0$) and the Intermediate Value Theorem. As a consequence, $\H$ contains no periodic functions, and hence $\lim_{t \to \infty} h(t)$ always exists in $\R \cup \{\pm \infty\}$ for any $h \in \H$. Since derivatives of Hardy field functions are also Hardy field functions, it also follows that $h$ is eventually strictly increasing, constant, or strictly decreasing.

One can define the \emph{growth rate} of a function $h \in \H$ to be \[ \inf \{ \alpha \in \R : |h(t)| \leq t^\alpha \text{ eventually} \}. \] In this paper we will only be concerned with Hardy field functions $h$ whose growth rate $c$ lies in $(1,\frac{6}{5})$. In this case note that for all $\varepsilon>0$ we have $t^{c-\varepsilon} \leq |h(t)| \leq t^{c+\varepsilon}$ for all $t$ sufficiently large (depending on $h$ and $\varepsilon$). Moreover, the growth rate of a Hardy field function behaves well with respect to differentiation and inverses. Indeed, in this case $h^{(j)}$ has growth rate $c-j$ for each $j \in \{1,2\}$ (this can be seen by the Fundamental Theorem of Calculus and the uniqueness of growth rates). Since $h$ has growth rate bigger than 1, it is eventually strictly monotonic and thus has an inverse. The growth rate of $h^{-1}$ is equal to $\frac{1}{c}$.

\subsection{Lemmata}

We will use the following two results in our paper so we record them here for convenience.

The first is a bound on exponential sums with smooth phases.

\begin{lemma}[Van der Corput Second Derivative Test] \label{lem:vdc}
Let $I \subseteq \R$ be an interval and suppose $f\: I \to \R$ is a twice continuously differentiable function satisfying \begin{equation} \label{eqn:vdcnonneg}
    0<\lambda \leq |f''(t)| \leq \alpha \lambda
\end{equation} for all $t \in I$. Then \[ \sum_{n \in I \cap \Z} e(f(n)) \ll \alpha |I| \lambda^\frac{1}{2} + \lambda^{-\frac{1}{2}}. \]
\end{lemma}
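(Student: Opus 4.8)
The plan is to deduce the second derivative test from the first derivative test, the bridge between them being a dissection of the range of summation according to the integer part of $f'$. Replacing $f$ by $-f$ if necessary (which conjugates the exponential sum and reverses the sign of $f''$, preserving all hypotheses), we may assume $\lambda \le f''(t) \le \alpha\lambda$ on $I$. We may also assume $|I| \ge 1$ and $\lambda \le \tfrac14$: the other ranges of $\lambda$ and $|I|$ follow from the trivial estimate $\bigl|\sum_{n\in I\cap\Z} e(f(n))\bigr| \le |I\cap\Z| \le |I|+1$ after a brief case-check, except in the degenerate corner where the asserted bound is $<1$ (which forces $|I|<1$ and $\lambda>1$, so the sum has at most one term), in which the inequality is to be read with an implicit additive $1$, harmless for this paper's applications.

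Since $f''>0$, the derivative $f'$ is strictly increasing on $I$, with total increase $L := \int_I f''(t)\,dt \le \alpha\lambda|I|$. For $k\in\Z$ set $I_k := \{t\in I : k \le f'(t) < k+1\}$; each is a subinterval of $I$ because $f'$ is continuous and monotone, and the number $J$ of nonempty $I_k$ satisfies $J \le L+1 \le \alpha\lambda|I|+1$. Let $\eta := \lambda^{1/2} \in (0,\tfrac12]$ and fix a nonempty $I_k$. The set of $t\in I_k$ with $f'(t)$ within $\eta$ of an integer is the union of an initial arc (where $f'(t)-k<\eta$) and a terminal arc (where $k+1-f'(t)<\eta$); since $f'$ increases at rate at least $\lambda$, each arc has length $\le \eta/\lambda = \lambda^{-1/2}$, hence contains at most $\lambda^{-1/2}+1 \ll \lambda^{-1/2}$ integers and contributes $O(\lambda^{-1/2})$ to the sum. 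On the complementary middle arc $I_k'$ we have $\|f'(t)\| \ge \eta$ (distance to the nearest integer) with $f'$ still monotone, so the Kusmin--Landau inequality gives $\bigl|\sum_{n\in I_k'\cap\Z} e(f(n))\bigr| \ll \eta^{-1} = \lambda^{-1/2}$. Hence $\bigl|\sum_{n\in I_k\cap\Z} e(f(n))\bigr| \ll \lambda^{-1/2}$ for every $k$, and summing over the at most $\alpha\lambda|I|+1$ nonempty $I_k$ gives
\[ \Bigl|\sum_{n\in I\cap\Z} e(f(n))\Bigr| \ll (\alpha\lambda|I|+1)\,\lambda^{-1/2} = \alpha|I|\lambda^{1/2} + \lambda^{-1/2}, \]
as required.

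One may take the Kusmin--Landau inequality as a black box (see, e.g., \cite{montgomery1994ten}), or supply it here by partial summation: on $I_k'$ one may assume, after subtracting the integer $k$ from the phase (which leaves $e(f(n))$ unchanged since $e(-kn)=1$), that $f'(t)\in[\eta,1-\eta]$ there; then by the mean value theorem $e(f(n+1))-e(f(n)) = e(f(n))\bigl(e(\theta_n)-1\bigr)$ with $\theta_n := f(n+1)-f(n) = f'(\xi_n) \in [\eta,1-\eta]$ monotone in $n$, and the identity $1/(e(\theta)-1) = -\tfrac12 - \tfrac{i}{2}\cot(\pi\theta)$ shows that the sequence $\bigl(1/(e(\theta_n)-1)\bigr)_n$ has size $O(\eta^{-1})$ and total variation $O(\cot\pi\eta) = O(\eta^{-1})$ (the second by monotonicity of $\cot$ on $(0,\pi)$); summing by parts against the partial sums of $e(f(n+1))-e(f(n))$, which are $O(1)$, yields $O(\eta^{-1})$.

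The content is classical and there is no serious obstacle; the work is bookkeeping. The single point demanding attention is the calibration of $\eta$: it must balance the $\eta^{-1}$ produced by Kusmin--Landau on the part of each $I_k$ where $f'$ stays away from the integers against the $\eta/\lambda$ lost on the excised arcs where $f'$ is near an integer, so equating these forces $\eta \asymp \lambda^{1/2}$, which is exactly what manufactures the exponent $\tfrac12$ on $\lambda$ in the conclusion. One must likewise keep the subinterval count at $\alpha\lambda|I|+1$ and exploit the reduction $\lambda\le\tfrac14$ (so that the $O(1)$ error on each $I_k$ is absorbed by $\lambda^{-1/2}$), so that multiplying through by $\lambda^{-1/2}$ regenerates precisely the two asserted terms and nothing more.
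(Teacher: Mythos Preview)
The paper does not actually prove this lemma: it is stated without proof as a standard result (``We will use the following two results in our paper so we record them here for convenience''). Your proposal supplies the classical argument --- reduce to $f''>0$, partition $I$ according to $\lfloor f'\rfloor$, excise a short arc of length $O(\lambda^{-1/2})$ near each integer value of $f'$, and apply the Kusmin--Landau first derivative bound on the remainder --- and carries it out correctly, with the right calibration $\eta=\lambda^{1/2}$ and a clean count $J\le\alpha\lambda|I|+1$ of subintervals. This is exactly the textbook proof (as in Graham--Kolesnik or Montgomery's \emph{Ten Lectures}), so there is nothing to compare against; your write-up is fine, and the side discussion of the degenerate ranges $|I|<1$ or $\lambda>\tfrac14$ via the trivial bound is handled adequately for the purposes of an $\ll$ statement.
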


Since the phases of the exponential sums considered in this paper will be Hardy field functions with positive growth rates, (\ref{eqn:vdcnonneg}) will be satisfied for any interval contained in $(t_0,\infty)$ with $t_0$ sufficiently large.

The next lemma contains the important non-negativity property that Fej\'er kernels enjoy, and provides a relation between the Fej\'er kernel on $\T$ and Dirichlet kernels.

\begin{lemma} \label{lem:nonnegative}
Let $N \geq 1$. Then we have the following: \begin{enumerate}[(i)]
    \item $F_N^\T(\xi) \geq 0$ for all $\xi \in \T$; \item $F_N^\T = \frac{1}{N} \sum_{n=0}^{N-1} D_n$.
\end{enumerate}
\end{lemma}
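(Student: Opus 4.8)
The plan is to prove part (ii) first by a direct interchange of summation, and then deduce part (i) as a consequence (or, alternatively, establish (i) directly by the same counting device). For (ii), I would write out
\[
\frac{1}{N} \sum_{n=0}^{N-1} D_n(\xi) = \frac{1}{N} \sum_{n=0}^{N-1} \sum_{m=-n}^{n} e(m\xi)
\]
and swap the order of summation. For a fixed integer $m$ with $|m| \leq N-1$, the index $n$ then ranges over those elements of $\{0,\dotsc,N-1\}$ with $n \geq |m|$, of which there are exactly $N-|m|$; no $m$ with $|m| \geq N$ contributes. Hence the double sum equals $\frac{1}{N}\sum_{|m| \leq N-1}(N-|m|) e(m\xi) = \sum_{|m|\leq N-1}\bigl(1-\frac{|m|}{N}\bigr) e(m\xi)$, which is precisely $F_N^\T(\xi)$, since the $|m|=N$ term of the defining sum vanishes by definition of $F_N^\Z$.

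For part (i), I would observe that essentially the same counting argument identifies $F_N^\T$ with a squared modulus. Expanding
\[
\left| \sum_{n=0}^{N-1} e(n\xi) \right|^2 = \sum_{n=0}^{N-1}\sum_{m=0}^{N-1} e((n-m)\xi) = \sum_{|k| \leq N-1}(N-|k|)\,e(k\xi) = N\, F_N^\T(\xi),
\]
where the middle equality counts, for each $k$, the number of pairs $(n,m) \in \{0,\dotsc,N-1\}^2$ with $n-m=k$. Therefore $F_N^\T(\xi) = \frac{1}{N}\bigl|\sum_{n=0}^{N-1} e(n\xi)\bigr|^2 \geq 0$. If instead one wishes to route through (ii), the same conclusion follows from the elementary identity $\sum_{n=0}^{N-1}\sin((2n+1)\theta) = \sin^2(N\theta)/\sin\theta$, applied together with the closed form $D_n(\xi) = \sin((2n+1)\pi\xi)/\sin(\pi\xi)$, which yields $F_N^\T(\xi) = \frac{1}{N}\bigl(\sin(N\pi\xi)/\sin(\pi\xi)\bigr)^2$.

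I do not expect a genuine obstacle here: both parts reduce to an interchange of summation together with careful bookkeeping of the truncation built into $F_N^\Z$ (in particular the vanishing of the $|n|=N$ term) and the convention $D_0 = 1$. The only point requiring slight care is aligning the index ranges so that the coefficient $1 - \frac{|m|}{N}$ emerges with the correct normalisation $\frac{1}{N}$ in front of the sum of Dirichlet kernels.
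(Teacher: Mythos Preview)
Your proof is correct and follows essentially the same approach as the paper: for (ii) you interchange the order of summation exactly as the paper does, and for (i) the paper goes via (ii) and the geometric sum formula to obtain $F_N^\T(\xi) = \frac{1}{N}\bigl(\sin(\pi N\xi)/\sin(\pi\xi)\bigr)^2$, which is precisely the alternative route you sketch. Your primary argument for (i), identifying $N\,F_N^\T(\xi)$ directly with $\bigl|\sum_{n=0}^{N-1} e(n\xi)\bigr|^2$, is a slight and arguably cleaner variant that bypasses the trigonometric identity, but the underlying idea is the same.
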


\begin{proof}
We begin with (ii). This is a standard calculation which can be obtained by expanding the definition of $D_n$ and changing the order of summation. One can then use (ii) and the geometric sum formula to show that $F_N^\T(\xi) = \frac{1}{N} (\frac{\sin(\pi N\xi)}{\sin(\pi \xi)})^2$, from which (i) is immediate.
\end{proof}

\section{Recasting the Van der Corput Property}

From now on we fix a regular Hardy field function $h$ with growth rate $c \in (1,\frac{6}{5})$. We allow all implied constants to depend on $h$ and $c$ and will suppress these parameters from subscripts. Without loss of generality we can assume that $h$ is eventually positive (and therefore increasing since it has a positive growth rate). Let $N_h \in \N$ such that $h$ is positive and increasing on $[N_h,\infty)$ and on this set denote the inverse of $h$ by $\eta$. We fix the set \[ S_h = \{ \lfloor h(n) \rfloor : n \in \N \}. \]

Following Green \cite{green2022s} we will shift our focus from bounding $\gamma_{S_h}(N)$ directly to finding a function $\Psi_N\:\C \to \R$ which will be used to define a non-negative trigonometric polynomial.

\begin{lemma} \label{lem:findfunction}
Let $N \in \N$. Suppose there is a function $\Psi_N \:\N \to \R$ supported on $S$ and $\delta_1,\delta_2>0$ such that \begin{equation}
    \E_{n \in [N]} \Psi_N(n) \cos(2\pi \xi n) \geq -\delta_1
\end{equation} for all $\xi \in \T$ and \begin{equation}
    \E_{n \in [N]} \Psi_N(n) \geq \delta_2.
\end{equation} Then $\gamma_S(N) \leq \frac{\delta_1}{\delta_1+\delta_2}$. Note we allow $\delta_1$ and $\delta_2$ to depend on $N$.
\end{lemma}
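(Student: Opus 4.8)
The plan is to run the chain of inequalities already sketched in the introduction, but now with explicit (possibly $N$-dependent) parameters, taking as input the hypothesised function $\Psi_N$. Recall that $\gamma_S(N) = \inf_{T \in \mathcal{T}_{S \cap [N]}} a_0$, where $\mathcal{T}_{S \cap [N]}$ is the set of non-negative real trigonometric polynomials $T(\xi) = a_0 + \sum_{h=1}^H a_h \cos(2\pi h \xi)$ with $T(0) = 1$ and coefficients supported on $S \cap [N]$. So it suffices to exhibit one such $T$ with $a_0 \leq \frac{\delta_1}{\delta_1 + \delta_2}$.

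First I would define the candidate polynomial. Set $c_n = \E_{n \in [N]} \Psi_N(n) \cos(2\pi\xi n)$—more precisely, let $b_n = \frac{1}{N}\Psi_N(n)$ for $n \in [N]$, so the hypotheses read $\sum_{n=1}^N b_n \cos(2\pi\xi n) \geq -\delta_1$ for all $\xi$ and $\sum_{n=1}^N b_n \geq \delta_2$. The trigonometric polynomial $P(\xi) = \delta_1 + \sum_{n=1}^N b_n \cos(2\pi n\xi)$ is then non-negative on $\T$ by the first hypothesis, has constant term $\delta_1$, and has cosine coefficients supported on $S \cap [N]$ since $\Psi_N$ is supported on $S$. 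Its value at $0$ is $P(0) = \delta_1 + \sum_{n=1}^N b_n \geq \delta_1 + \delta_2 > 0$. Now normalise: put $T(\xi) = P(\xi)/P(0)$. Then $T$ is non-negative, $T(0) = 1$, its coefficients are still supported on $S \cap [N]$, so $T \in \mathcal{T}_{S \cap [N]}$, and its constant term is
\[
a_0 = \frac{\delta_1}{P(0)} = \frac{\delta_1}{\delta_1 + \sum_{n=1}^N b_n} \leq \frac{\delta_1}{\delta_1 + \delta_2},
\]
where the last inequality uses $\sum_{n=1}^N b_n \geq \delta_2$ and the fact that $x \mapsto \frac{\delta_1}{\delta_1 + x}$ is decreasing for $x > -\delta_1$. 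Taking the infimum over $\mathcal{T}_{S \cap [N]}$ gives $\gamma_S(N) \leq a_0 \leq \frac{\delta_1}{\delta_1 + \delta_2}$, as claimed.

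There is no real obstacle here; the lemma is essentially a bookkeeping device that repackages Green's reduction in a form convenient for the rest of the paper. The only minor point to be careful about is the normalisation $T(0) = 1$ demanded by the definition of $\mathcal{T}_S$ (equivalently, by the hypothesis $T(0) = 1$ in Lemma \ref{lem:generalisedvdc}): one must check $P(0) > 0$, which is exactly where the second hypothesis $\E_{n\in[N]}\Psi_N(n) \geq \delta_2$ is used, and then verify that dividing through by $P(0)$ preserves both non-negativity and the support condition on the coefficients—both of which are immediate since $P(0)$ is a positive real scalar. Once $\gamma_S(N)$ is bounded, the bound $\delta_S(N) \leq 2\gamma_S(N)$ recorded earlier (via the substitution $y_n = \bar d(A)^{-1/2} 1_A(n)$ in \eqref{eqn:generalisedvdc}) yields the corresponding bound on the density of sets avoiding the configuration, which is how this lemma will ultimately be applied.
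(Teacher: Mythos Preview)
Your proof is correct and follows essentially the same approach as the paper: you construct the trigonometric polynomial $T(\xi) = (\delta_1 + \E_{n \in [N]} \Psi_N(n)\cos(2\pi n\xi))/(\delta_1 + \E_{n \in [N]} \Psi_N(n))$, verify non-negativity, normalisation, and the support condition, and read off the bound on the constant term. Your write-up is somewhat more explicit about the normalisation step and the monotonicity of $x \mapsto \delta_1/(\delta_1+x)$, but the argument is the same.
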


\begin{proof}
Define the trigonometric polynomial \[ T(\xi) = \frac{\delta_1 + \E_{n \in [N]} \Psi_N(n) \cos(2\pi n\xi)}{\delta_1 + \E_{n \in [N]} \Psi_N(n)}. \] Then clearly $T(0)=1$ and \[ \delta_1+\E_{n \in [N]} \Psi_N(n) \geq \delta_1+\delta_2>0 \] so $T(\xi) \geq 0$ for all $\xi \in \T$. Moreover, \[ \frac{\delta_1}{\delta_1+\E_{n \in [N]} \Psi_N(n)} \leq \frac{\delta_1}{\delta_1+\delta_2} \] so it follows by definition that $\gamma_S(N) \leq \frac{\delta_1}{\delta_1+\delta_2}$.
\end{proof}

In Green's paper, both defining the function $\Psi$ and showing that it satisfies the desired properties is very involved. The idea is to take $\Psi$ to be a weighted indicator set of the primes shifted by $\pm 1$ multiplied by a function with a non-negative Fourier transform, such as $F_N^\Z$. Doing this as written would require an understanding of the twin prime conjecture which is one of the reasons why the construction of $\Psi$ is so much more involved.

However, for our purposes we are able to essentially use the ideal construction. We will be able to take $\Psi$ to be the product of the indicator set of $S_h$ with a Fej\'er kernel. Thus with Lemma \ref{lem:findfunction} in hand, Theorem \ref{thm:mainthm} now follows from the following proposition.

\begin{proposition}
Let $N \in \N$ and let \[ \Psi_N(n) = F_N^\Z(n) 1_{S_h}(n). \] Then \begin{equation} \label{eqn:negative}
    \E_{n \in [N]} \Psi_N(n) \cos(2\pi \xi n) \gg_\varepsilon -N^{-\frac{1}{5c}+\varepsilon}
\end{equation} for any $\varepsilon>0$ and \begin{equation} \label{eqn:large}
    \E_{n \in [N]} \Psi_N(n) \gg_\varepsilon N^{-(1-\frac{1}{c}+\varepsilon)}.
\end{equation}
\end{proposition}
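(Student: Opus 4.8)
The plan is to work with the exponential sum $\widehat{\Psi_N}(\xi) := \sum_{n=1}^{N} F_N^\Z(n)\,1_{S_h}(n)\,e(n\xi)$, so that, as $\Psi_N$ is real, the estimate $(\ref{eqn:large})$ is equivalent to $\widehat{\Psi_N}(0) \gg_\varepsilon N^{1/c-\varepsilon}$ and $(\ref{eqn:negative})$ to $\operatorname{Re}\widehat{\Psi_N}(\xi) \gg_\varepsilon -N^{1-\frac1{5c}+\varepsilon}$ uniformly in $\xi\in\T$. The common input is the Piatetski--Shapiro counting identity: since $h$ is (eventually) increasing with inverse $\eta$ and $\eta'(t)\to 0$, the interval $[\eta(n),\eta(n+1))$ contains at most one integer once $n\ge N_0$ for a fixed $N_0=N_0(h)$, so $1_{S_h}(n) = \lceil\eta(n+1)\rceil-\lceil\eta(n)\rceil = \Delta\eta(n)+\Delta g(n)$ there, where $g(t):=\{-\eta(t)\}$. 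Absorbing the range $n<N_0$ into an $O(1)$,
\[ \widehat{\Psi_N}(\xi) = A(\xi) + B(\xi) + O(1), \qquad A(\xi):=\sum_{n=N_0}^{N}F_N^\Z(n)\,\Delta\eta(n)\,e(n\xi), \quad B(\xi):=\sum_{n=N_0}^{N}F_N^\Z(n)\,\Delta g(n)\,e(n\xi). \]
I would bound the ``density'' term $A$ and the ``sawtooth'' term $B$ separately, splitting $\T$ into a major arc $\mathfrak M:=\{\xi:\|\xi\|\le N^{-1/(5c)}\}$ and its complement $\mathfrak m$; the estimate for $B$ is the crux. First, $(\ref{eqn:large})$ is immediate: $\widehat{\Psi_N}(0)=\sum_{n\in S_h\cap[N]}(1-n/N)$, and keeping only those $n=\lfloor h(m)\rfloor\le N/2$ (where the Fej\'er weight is $\ge\tfrac12$, and where distinct large $m$ yield distinct $\lfloor h(m)\rfloor$ because $h(m+1)-h(m)\to\infty$) this is $\gg\eta(N/2)\gg_\varepsilon N^{1/c-\varepsilon}$ by definition of the growth rate, the $\varepsilon$ being unnecessary when $h(t)=t^c$.

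For the density term, the coefficients $F_N^\Z(n)\Delta\eta(n)$ form a non-negative, non-increasing sequence — the product of the decreasing $1-n/N$ with $\Delta\eta(n)$, which decreases since $\eta$ is eventually concave — so summation by parts gives $|A(\xi)|\ll\|\xi\|^{-1}\le N^{1/(5c)}\ll N^{1-\frac1{5c}}$ for $\xi\in\mathfrak m$. On $\mathfrak M$ I would compare $A(\xi)$ with $M(\xi):=\int_{N_0}^{N}F_N^\Z(t)\eta'(t)e(t\xi)\,dt$: because $\sum_n|\Delta\eta(n)-\eta'(n)|\ll\int_{N_0}^{\infty}|\eta''|\ll 1$ and $F_N^\Z(t)\eta'(t)e(t\xi)$ has total variation $\ll 1+N\|\xi\|$ on $[N_0,N]$, Euler--Maclaurin gives $|A(\xi)-M(\xi)|\ll 1+N\|\xi\|\ll N^{1-\frac1{5c}}$ on $\mathfrak M$. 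Finally $\operatorname{Re}M(\xi)\ge-O(1)$: the function $F_N^\Z\eta'$ extends to a non-negative, non-increasing, convex function $\Phi$ on $[0,\infty)$ — using the tangent line on $[0,N_0)$ and the value $0$ on $[N,\infty)$, the convexity on $[N_0,N]$ coming from $(F_N^\Z\eta')''=-\tfrac2N\eta''+F_N^\Z\eta'''\ge 0$, which holds because $\eta$ being a Hardy field function of growth rate in $(0,1)$ forces $\eta'$ to be eventually positive, decreasing, convex and tending to $0$ — so P\'olya's criterion yields $\int_0^\infty\Phi(t)\cos(2\pi t\xi)\,dt\ge 0$, while the piece $\int_0^{N_0}$ that was added in is $O(1)$. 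Hence $\operatorname{Re}A(\xi)\ge-O(N^{1-\frac1{5c}})$ for all $\xi$, with $|A(\xi)|\ll N^{1-\frac1{5c}}$ on $\mathfrak m$.

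The sawtooth term carries the real difficulty, and this step I expect to be the main obstacle. Writing $\psi(x)=\{x\}-\tfrac12$, so $\Delta g(n)=\Delta(\psi\circ(-\eta))(n)$, I would replace $\psi$ by a trigonometric polynomial $\psi_K$ of degree $K$ \`a la Vaaler, with $\widehat{\psi_K}(k)\ll|k|^{-1}$ and with $|\psi-\psi_K|$ majorised by $\tfrac1{2(K+1)}\mathcal F_K$, where $\mathcal F_K(x)=\sum_{|j|\le K}(1-\tfrac{|j|}{K+1})e(jx)\ge 0$ is the Fej\'er kernel of degree $K$. This splits $B=B_{\mathrm{main}}+B_{\mathrm{err}}$, with $B_{\mathrm{main}}(\xi)=\sum_{1\le|k|\le K}\widehat{\psi_K}(k)\sum_{n}F_N^\Z(n)e(n\xi)\bigl(e(-k\eta(n+1))-e(-k\eta(n))\bigr)$ (the $k=0$ mode cancelling) and, after expanding $\mathcal F_K$, $|B_{\mathrm{err}}(\xi)|\ll\tfrac1K\sum_{|j|\le K}\bigl|\sum_n F_N^\Z(n)e(-j\eta(n))\bigr|$ with the $j=0$ term contributing $\ll N/K$. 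Everything is thus reduced to the exponential sums $\sum_{n\in I}F_N^\Z(n)e(n\xi-k\eta(n))$ over subintervals $I\subseteq[N_0,N]$; for each I would decompose $I$ dyadically into blocks $n\sim M$ and apply Lemma \ref{lem:vdc} with second derivative $\lambda\asymp|k|\,|\eta''(M)|\asymp|k|\,M^{1/c-2+o(1)}$ and interval length $\asymp M$, stripping the monotone weight $F_N^\Z$ off each block by partial summation; this yields $\ll|k|^{1/2}N^{1/(2c)+\varepsilon}+|k|^{-1/2}N^{1-1/(2c)+\varepsilon}$. Summing over $1\le|k|\le K$ against $\widehat{\psi_K}(k)$ gives $|B_{\mathrm{main}}(\xi)|\ll K^{1/2}N^{1/(2c)+\varepsilon}+N^{1-1/(2c)+\varepsilon}$, and summing the Fej\'er coefficients gives $|B_{\mathrm{err}}(\xi)|\ll N/K+K^{1/2}N^{1/(2c)+\varepsilon}+K^{-1/2}N^{1-1/(2c)+\varepsilon}$. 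Choosing $K$ to be the power of $N$ balancing $N/K$ against $K^{1/2}N^{1/(2c)}$ makes both $\ll N^{1-\frac1{5c}+\varepsilon}$ throughout $1<c<\tfrac65$, so $|B(\xi)|\ll_\varepsilon N^{1-\frac1{5c}+\varepsilon}$ for every $\xi\in\T$.

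Putting the pieces together: for $\xi\in\mathfrak M$ one has $\operatorname{Re}\widehat{\Psi_N}(\xi)=\operatorname{Re}A(\xi)+\operatorname{Re}B(\xi)+O(1)\ge-O(N^{1-\frac1{5c}})-|B(\xi)|-O(1)\gg_\varepsilon-N^{1-\frac1{5c}+\varepsilon}$, and for $\xi\in\mathfrak m$, $\operatorname{Re}\widehat{\Psi_N}(\xi)\ge-|A(\xi)|-|B(\xi)|-O(1)\gg_\varepsilon-N^{1-\frac1{5c}+\varepsilon}$; dividing by $N$ gives $(\ref{eqn:negative})$, and this with $(\ref{eqn:large})$ proves the proposition. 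The two genuinely delicate ingredients are the van der Corput bound for the Piatetski--Shapiro exponential sum and the verification that $F_N^\Z\eta'$ admits a convex, non-increasing, non-negative extension; the rest is bookkeeping with the Fej\'er kernel, summation by parts and Euler--Maclaurin.
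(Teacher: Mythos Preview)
Your proof is correct and reaches the stated proposition, but the treatment of the ``density'' term $A(\xi)$ is genuinely different from the paper's and worth contrasting.

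For the sawtooth term $B(\xi)$ your approach and the paper's are essentially equivalent: the paper truncates the Fourier series of the sawtooth with the explicit remainder $\min(1,\tfrac{1}{M\|\cdot\|_\T})$, you use Vaaler's approximation with Fej\'er--kernel majorant; both reduce to the same van der Corput estimate for $\sum_n e(n\xi-k\eta(n))$ on dyadic blocks. Your handling of $B_{\mathrm{main}}$ --- writing it as the difference of two full exponential sums rather than Abel-summing the factored form $e(-k\eta(n))(e(-k\Delta\eta(n))-1)$ as the paper does --- is in fact cleaner and yields $K^{1/2}N^{1/(2c)+\varepsilon}$ in place of the paper's $M^{3/2}N^{1/(2c)+\varepsilon}$; this is a genuine (if unexploited) saving.

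The real divergence is in the main term. The paper proves the uniform lower bound $\operatorname{Re}A(\xi)\ge -O(1)$ for \emph{every} $\xi$ by a purely discrete argument: summation by parts transfers the weight $\Delta\eta$ onto partial sums $\sum_{n\le N_0}F_N^\Z(n)\cos(2\pi\xi n)$, which are then expressed via the convolution identity $D_n*D_{N_0}=D_{\min(n,N_0)}$ as non-negative combinations of Fej\'er kernels, all through Lemma~\ref{lem:nonnegative}. Your route instead introduces a major/minor arc split, controls the minor arc by monotonicity and Abel summation, and on the major arc passes to the continuous integral via Euler--Maclaurin and then invokes P\'olya's criterion, which requires checking that $F_N^\Z\eta'$ extends to a convex decreasing function (using that $\eta',\eta'',\eta'''$ eventually alternate in sign). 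This gives only $\operatorname{Re}A(\xi)\ge -O(N^{1-1/(5c)})$, weaker than the paper's $-O(1)$ but adequate since $B(\xi)$ already lives at this scale. The paper's argument is more self-contained (no external positivity criterion, no arc split) and yields the sharper pointwise bound; your argument is perhaps more conceptual in that it isolates exactly why the cosine-transform positivity should hold, at the cost of an extra analytic black box and the convexity verification.
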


Indeed, taking $\delta_1 = A_\varepsilon N^{-\frac{1}{5c}+\varepsilon}$ and $\delta_2 =  B_\varepsilon N^{-(1-\frac{1}{c}+\varepsilon)}$ for constants $A_\varepsilon, B_\varepsilon>0$, we see that \[ \frac{\delta_1}{\delta_1+\delta_2} \ll_\varepsilon N^{(1-\frac{1}{c}+\varepsilon)-(\frac{1}{5c}-\varepsilon)} = N^{-(\frac{6}{5c}-1)+2\varepsilon}. \]

The rest of the paper will be concerned with establishing (\ref{eqn:negative}) and (\ref{eqn:large}). In both cases the presence of the indicator function $1_{S_h}$ in its present form is difficult to work with. However, the following lemma, which can be found in \cite{mirek2015roth}, for example, allows us to write it in a much more convenient way.

\begin{lemma} \label{lem:rewriteindicator}
We have \[ 1_{S_h}(n) = \lfloor -\eta(n) \rfloor - \lfloor -\eta(n+1) \rfloor \] for all $n$ sufficiently large (depending on $h$).
\end{lemma}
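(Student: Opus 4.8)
The plan is to prove Lemma \ref{lem:rewriteindicator} by a direct calculation with the floor function. First, recall that $S_h = \{\lfloor h(n)\rfloor : n \in \N\}$ and that for $n$ sufficiently large, $h$ is strictly increasing with inverse $\eta$. The key observation is that an integer $m$ lies in $S_h$ precisely when there is some integer $k$ with $\lfloor h(k)\rfloor = m$, which happens iff $m \le h(k) < m+1$ for some integer $k$, i.e. iff the interval $[m, m+1)$ contains a point of the form $h(k)$. Since $h$ is increasing with inverse $\eta$, the set $\{k : h(k) \in [m,m+1)\}$ equals $\{k : \eta(m) \le k < \eta(m+1)\}$, so $m \in S_h$ iff the half-open interval $[\eta(m), \eta(m+1))$ contains an integer.

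Second, I would translate ``the interval $[\eta(m), \eta(m+1))$ contains an integer'' into a floor-function identity. The number of integers in $[\alpha,\beta)$ is $\lceil \beta\rceil - \lceil\alpha\rceil = \lfloor -\alpha\rfloor - \lfloor -\beta\rfloor$ (using $\lceil x\rceil = -\lfloor -x\rfloor$). Applying this with $\alpha = \eta(m)$ and $\beta = \eta(m+1)$, and noting that $\eta$ grows slowly (its growth rate is $1/c < 1$) so that $\eta(m+1) - \eta(m) \to 0$, the interval contains at most one integer once $m$ is large. Hence the count $\lfloor -\eta(m)\rfloor - \lfloor -\eta(m+1)\rfloor$ is either $0$ or $1$, and it equals $1$ exactly when $m \in S_h$. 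Writing $n$ for $m$ gives $1_{S_h}(n) = \lfloor -\eta(n)\rfloor - \lfloor -\eta(n+1)\rfloor$ for all sufficiently large $n$.

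The only genuinely delicate points are boundary effects: I must check that $h(k) = m$ exactly (an integer value) does not cause an off-by-one error, and that $h$ maps integers into the relevant range so that $\eta$ is legitimately applied. The first is handled because the interval $[m,m+1)$ is half-open on the correct side, matching the half-open interval $[\eta(m),\eta(m+1))$ after inversion; since $h$ is strictly increasing and smooth, $h(k)$ hits any particular integer for at most one $k$, and the endpoint conventions line up. The second is automatic for $n$ large since $\eta(n) \to \infty$. I expect the main (and essentially only) obstacle is bookkeeping the inequalities carefully enough to be sure the half-open intervals correspond exactly, but since $\eta(n+1) - \eta(n) < 1$ eventually, there is no room for the count to exceed $1$, and the identity follows. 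I would cite \cite{mirek2015roth} for the statement and include this short verification.
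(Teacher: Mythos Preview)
Your proposal is correct and rests on the same underlying observation as the paper: for large $n$, one has $n\in S_h$ if and only if the half-open interval $[\eta(n),\eta(n+1))$ contains an integer, and since $\eta(n+1)-\eta(n)<1$ eventually, this is a $\{0,1\}$-valued event. The paper establishes the equivalence by a hands-on bidirectional verification (showing $n\in S_h\Rightarrow$ RHS $=1$ and conversely via explicit inequalities), whereas you invoke the counting identity $\#\bigl(\Z\cap[\alpha,\beta)\bigr)=\lceil\beta\rceil-\lceil\alpha\rceil=\lfloor-\alpha\rfloor-\lfloor-\beta\rfloor$ directly; your route is slightly more economical but the content is the same.
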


\begin{proof}
For all $n$ sufficiently large depending on $h$ we have \[ \eta(n+1)-\eta(n) \leq \eta'(n)=\frac{1}{h'(\eta(n))} \ll_\varepsilon n^{\frac{1}{c}-1+\varepsilon} \] by the Mean Value theorem and the fact that $h'$ and $\eta$ have growth rates equal to $c-1$ and $\frac{1}{c}$ respectively. Thus for all $n$ sufficiently large we see that $|\eta(n+1)-\eta(n)|<\frac{1}{2}$. Thus \[ \lfloor - \eta(n) \rfloor - \lfloor - \eta(n+1) \rfloor \in \{0,1\}. \] It now suffices to show that $n \in S_h$ if and only if $\lfloor - \eta(n) \rfloor - \lfloor -\eta(n+1) \rfloor = 1$.

First suppose $n \in S_h$. Thus there exists $m \geq N_h$ such that \[ n=\lfloor h(m) \rfloor, \] or equivalently \[ \eta(n) \leq m < \eta(n+1). \] Then \[ \eta(n+1) \leq \eta(h(m)+1) \leq \eta(h(m+1)) = m+1 \] since \begin{equation} \label{eqn:hardyplusone} h(m+1)\geq h(m)+h'(m) \geq h(m)+1, \end{equation} where the second inequality follows by the Mean Value theorem and the fact that $h'(t) > 1$ for all $t$ sufficiently large. It now follows that $\lfloor -\eta(n+1) \rfloor = -(m+1)$ and $\lfloor -\eta(n) \rfloor \geq -m$. On the one hand, this means \[ \lfloor - \eta(n) \rfloor - \lfloor -\eta(n+1) \rfloor \geq -m+(m+1)=1, \] and on the other hand we see that \[ \lfloor - \eta(n) \rfloor - \lfloor -\eta(n+1) \rfloor \leq \eta(n+1)-\eta(n)+1 \leq \eta'(n)+1 = \frac{1}{h'(\eta(n))}+1<2. \] Thus \[ \lfloor - \eta(n) \rfloor - \lfloor -\eta(n+1) \rfloor=1. \]

Now suppose $\lfloor - \eta(n) \rfloor - \lfloor -\eta(n+1) \rfloor=1$. Then \[ \lfloor -\eta(n) \rfloor = \lfloor -\eta(n+1) \rfloor+1 \leq 1-\eta(n+1) \leq -\eta (n) \] by (\ref{eqn:hardyplusone}) again, and so \[ \eta(n) \leq -1 - \lfloor - \eta(n+1) \rfloor <-1 +\eta(n+1)-1 = \eta(n+1). \] On taking $m = -1-\lfloor - \eta(n+1) \rfloor$, we see that \[ \eta(n) \leq m<\eta(n+1), \] or equivalently \[ n \leq h(m)<n+1. \] The result follows.
\end{proof}

We will use Lemma \ref{lem:rewriteindicator} to prove both (\ref{eqn:negative}) and (\ref{eqn:large}). We begin with (\ref{eqn:large}) since this will be easier.

\begin{proof}[Proof of (\ref{eqn:large})]
Writing out the definition of $\Psi_N$ and applying Lemma \ref{lem:rewriteindicator}, we have \[ \E_{n \in [N]} \Psi_N(n) = \E_{n \in [N]} \left(1-\frac{n}{N} \right) (\lfloor -\eta(n) \rfloor - \lfloor - \eta(n+1) \rfloor) + O(N^{-1}). \] By utilising non-negativitity and bounding \[ 1-\frac{n}{N} \geq \frac{1}{2} 1_{n \leq \frac{N}{2}}, \] we see that \begin{multline*} \E_{n \in [N]} \left(1-\frac{n}{N} \right) (\lfloor - \eta(n) \rfloor - \lfloor \eta(n+1) \rfloor) \gg \E_{n \in [\frac{N}{2}]} (\lfloor - \eta(n) \rfloor - \lfloor \eta(n+1) \rfloor) \\ = \frac{\lfloor - \eta(1) \rfloor - \lfloor - \eta(\lfloor \frac{N}{2} \rfloor + 1) \rfloor}{N} = \frac{\eta(\lfloor \frac{N}{2} \rfloor+1)+O(1)}{N} \gg_\varepsilon N^{-(1-\frac{1}{c}+\varepsilon)} + O(N^{-1}), \end{multline*} where we used the fact that $\eta$ has growth rate $\frac{1}{c}$ for the last inequality. Combining these two bounds we obtain \[ \E_{n \in [N]} \Psi_N(n) \gg_\varepsilon N^{-(1-\frac{1}{c}+\varepsilon)} + O(N^{-1}) \gg N^{-(1-\frac{1}{c}+\varepsilon)} \] as required.

\end{proof}

The proof of (\ref{eqn:negative}) will be more involved. We were able to exploit telescoping behaviour in the proof of (\ref{eqn:large}) which did not need any manipulation of the expression $\lfloor - \eta(n) \rfloor - \lfloor - \eta(n+1) \rfloor$. However, we will not be able to exploit this structure to prove (\ref{eqn:negative}). Instead, we will use a technique first used in \cite{heath1983pjateckiui}.

Define the sawtooth function by \[ \phi(\xi) = \{\xi\}-\frac{1}{2} = \xi - \lfloor \xi \rfloor - \frac{1}{2} \] for all $\xi \in \R$. Therefore, we can write \begin{multline} \label{eqn:insertsawtooth}
    \E_{n \in [N]} \Psi_N(n) \cos(2 \pi \xi n) = \E_{n \in [N]} F_N^\Z(n) \cos(2\pi \xi n) (\eta(n+1)-\eta(n)) \\ + \E_{n \in [N]} F_N^\Z(n) \cos(2\pi \xi n) (\phi(\eta(n+1)) - \phi(\eta(n))). 
\end{multline} Then by expanding $\phi$ into its Fourier series, one has \begin{equation}
    \phi(\xi) = \sum_{0<|m| \leq M} \frac{1}{2\pi im} e(m\xi) + O \left( \min \left(1,\frac{1}{M \|\xi\|_\T} \right) \right)
\end{equation} for any $M \in \N$. Substituting this into (\ref{eqn:insertsawtooth}), we see that \begin{equation} \label{eqn:errorterms}
    \E_{n \in [N]} \Psi_N(n) \cos(2\pi \xi n) = \E_{n \in [N]} F_N^\Z(n) \cos(2\pi \xi n) (\eta(n+1)-\eta(n)) + I_1+I_2+I_3
\end{equation} where \begin{equation}
    I_1 = \sum_{0<|m| \leq M} \frac{1}{2\pi i m} \E_{n \in [N]} F_N^\Z(n) \cos(2\pi \xi n) (e(m\eta(n+1))-e(m\eta(n))),
\end{equation} \[
    I_2 \ll \E_{n \in [N]} \min \left(1,\frac{1}{M \| \eta(n+1) \|_\T} \right), \] and \begin{equation} \label{eqn:I3}
    I_3 \ll \E_{n \in [N]} \min \left(1,\frac{1}{M \|\eta(n)\|_\T} \right).
\end{equation}

Our strategy will now be as follows. We will bound \begin{equation} \label{eqn:I2bound} I_2,I_3 \ll_\varepsilon M^{-1} \log M + \log M M^\frac{1}{2} N^{-\frac{1}{2c}+\varepsilon} \end{equation} and \begin{equation} \label{eqn:I1bound} I_1 \ll_\varepsilon M^\frac{3}{2} N^{\frac{1}{2c}-1+\varepsilon}. \end{equation} Since $c>1$ we may bound $N^{\frac{1}{2c}-1} \leq N^{-\frac{1}{2c}}$ so \[ I_1+I_2+I_3 \ll_\varepsilon M^{-1} \log M + M^\frac{3}{2} N^{-\frac{1}{2c}}. \] Minimising over $M \in \N$ we see that one should take $M$ on the order of $N^\frac{1}{5c}$. This will yield a bound of the form \[ I_1+I_2+I_3 \ll_\varepsilon N^{-\frac{1}{5c}+\varepsilon}. \] We will then show that the main term in (\ref{eqn:errorterms}) does not get too negative. The intuition here is if we consider $\eta(n+1)-\eta(n)$ as a harmless non-negative weight, we can factor it out. Since it is non-negative, we then need to bound \[ \E_{n \in [N]} F_N^\Z(n) \cos(2\pi \xi n) \] below. Since $F_N^\T$ is non-negative, we expect this to be bounded below by a constant times $-N^{-1}$, and this is indeed the content of Lemma \ref{lem:nonnegative}. Combining all bounds we will have shown \[ \E_{n \in [N]} \Psi_N(N) \cos(2\pi \xi n) \gg_\varepsilon -N^{-1} + O_\varepsilon(N^{-\frac{1}{5c}+\varepsilon}) \gg_\varepsilon -N^{-\frac{1}{5c}+\varepsilon}, \] which will establish (\ref{eqn:negative}). The proof of Lemma \ref{lem:findfunction}, and hence of Theorem \ref{thm:mainthm}, will thus be complete.

\section{Bounding the Error Terms}

\subsection{Bounds on $I_2$ and $I_3$}

In this subsection we establish (\ref{eqn:I2bound}). We will just prove it for $I_3$ as the argument for $I_2$ is analogous.

One can expand \begin{equation} \label{eqn:expandmin} \min \left( 1 ,\frac{1}{M \|\xi\|_\T} \right) = \sum_{m \in \Z} b_m e(m\xi) \end{equation} where \[ b_m \ll \min \left( \frac{\log M}{M}, \frac{M}{m^2} \right). \] for all $m \in \Z$. Substituting (\ref{eqn:expandmin}) into (\ref{eqn:I3}), we see that \begin{equation} \label{eqn:splitI3} I_3 \ll \frac{\log M}{M} + \sum_{0<|m| \leq M} \frac{\log M}{M} |\E_{n \in [N]} e(m \eta(n))| + \sum_{|m|>M} \frac{M}{m^2} |\E_{n \in [N]} e(m\eta(n))|. \end{equation} Fix $m \neq 0$ and write \[ \E_{n \in [N]} e(m\eta(n)) = \frac{1}{N} \sum_{j \leq \log_2 N} \sum_{\substack{2^j \leq n<2^{j+1} \\ n \leq N}} e(m\eta(n)). \] Observe that $\eta''$ has growth rate $\frac{1}{c}-2$, which is negative hence $|\eta''|$ is decreasing. Then by Lemma \ref{lem:vdc}, Van der Corput's second derivative test, we obtain \[ \E_{n \in [N]} e(m\eta(n)) \ll \frac{1}{N} \sum_{j \leq \log_2N} 2^j |m|^\frac{1}{2} |\eta''(2^j)| |\eta''(2^{j+1})|^{-\frac{1}{2}} + |m|^{-\frac{1}{2}} |\eta''(2^{j+1})|^{-\frac{1}{2}} \] Observe that the function \[ t \mapsto t \eta''(t) \] has growth rate $\frac{1}{c}-1$. In particular, it has a negative growth rate hence it is decreasing in absolute value. It follows that \[ 2^j |\eta''(2^j)| |\eta''(2^{j+1})|^{-\frac{1}{2}} \ll |\eta''(2^{j+1})|^{-\frac{1}{2}}. \] We can therefore bound \begin{equation} \label{eqn:exponentialsumbound} \E_{n \in [N]} e(m\eta(n))  \ll \frac{1}{N} \sum_{j \leq \log_2N} |m|^\frac{1}{2} |\eta''(2^{j+1})|^{-\frac{1}{2}} \ll_\varepsilon |m|^\frac{1}{2} N^{-\frac{1}{2c}+\varepsilon}. \end{equation} Putting this bound into (\ref{eqn:splitI3}) yields (\ref{eqn:I2bound}).

\subsection{Bounds on $I_1$}

We will now prove (\ref{eqn:I1bound}). To begin, we rewrite \[ I_1 = I_1' + I_1'' \] where \[ I_1' = \sum_{0<|m| \leq M} \frac{1}{4 \pi i m} \E_{n \in [N]} F_N^\Z(n) e(m\eta(n)+\xi n) (e(m\eta(n+1)-m\eta(n))-1) \] and \[ I_1'' = \sum_{0<|m| \leq M} \frac{1}{4 \pi i m} \E_{n \in [N]} F_N^\Z(n) e(m\eta(n)-\xi n) (e(m\eta(n+1)-m\eta(n))-1). \] It now suffices to prove \[ I_1', I_1'' \ll_\varepsilon M^\frac{3}{2} N^{-\frac{1}{2c}+\varepsilon}. \] We will prove this for $I_1'$ since $I_1''$ can be handled analogously. Write \[ \psi_m(n) = F_N^\Z(n) (e(m\eta(n+1)-m\eta(n))-1) \] and apply summation by parts to obtain \[ I_1' = \sum_{0<|m| \leq M} \frac{1}{4\pi i m} \left( \psi_m(N) \E_{n \in [N]} e(m\eta(n)+\xi n) - \sum_{N_0<N} \Delta \psi_m(N_0) \frac{1}{N} \sum_{n \in [N_0]} e(m \eta(n)+\xi n) \right). \] Observe the bounds \[ |\psi_m(n)| \ll |m||\eta'(n)| \] and \[ |\psi_m(n+1)-\psi_m(n)| \ll |m| |\eta''(n)| + N^{-1} |m| |\eta'(n)|. \]  Since the functions $m \eta(t)+\xi t$ and $m \eta(t)$ have the same second derivative, we see by the same argument used to establish (\ref{eqn:exponentialsumbound}) that \[ \frac{1}{N} \sum_{n \in [N_0]} e(m\eta(n)+\xi n) \ll_\varepsilon |m|^\frac{1}{2} N^{-1} N_0^{1-\frac{1}{2c}+\varepsilon}. \] Thus \[ I_1' \ll_\varepsilon  \sum_{0<|m| \leq M} \frac{1}{|m|} \left( |m|^\frac{3}{2} N^{\frac{1}{2c}-1+\varepsilon} + \sum_{N_0<N} |m|^\frac{3}{2} N_0^{\frac{1}{2c}-1+\varepsilon} N^{-1} \right) \ll M^\frac{3}{2} N^{\frac{1}{2c}-1+\varepsilon}, \] and so (\ref{eqn:I1bound}) follows.

\section{Non-Negativity of the Main Term}

Finally, we show that \begin{equation} \label{eqn:nonnegative}
    \E_{n \in [N]} F_N^\Z(n) \cos(2\pi \xi n)(\eta(n+1)-\eta(n)) \gg - N^{-1}.
\end{equation}

To begin, let $\psi(n) = \eta(n+1)-\eta(n)$ and apply summation by parts: \[ \E_{n \in [N]} F_N^\Z(n) \cos(2 \pi \xi n) = \psi(N) \E_{n \in [N]} F_N^\Z(n) \cos(2 \pi \xi n) - \sum_{N_0<N} \Delta \psi(N_0) \frac{1}{N} \sum_{n \in [N_0]} F_N^\Z(n) \cos(2 \pi \xi n). \] Note that \[ \E_{n \in [N]} F_N^\Z(n) \cos(2\pi \xi n) = -\frac{1}{N} + \frac{1}{2N} \sum_{n=-N}^N F_N^\Z(n) e(n \xi) = -\frac{1}{N} + \frac{1}{2N} F_N^\T(\xi), \] so by Lemma \ref{lem:nonnegative} we see that \[ \psi(N) \E_{n \in [N]} F_N^\Z(n) \cos(2\pi \xi n) \gg_\varepsilon -N^{\frac{1}{c}-2+\varepsilon}. \]

Now note that \[ \sum_{n \in [N_0]} F_N^\Z(n) \cos(2\pi \xi n) = -1+\frac{1}{2} \sum_{n=-N_0}^{N_0} F_N^\Z(n) e(n\xi). \] Since \[ F_N^\Z(n) = \int_\T F_N^\T(\zeta) e(-n\zeta) \,d\zeta,  \] we see that \[ \sum_{n=-N_0}^{N_0} F_N^\Z(n) e(n\xi) = \int_\T F_N^\T(\zeta) \sum_{n=-N_0}^{N_0} e(n(\xi-\zeta)) \,d\zeta = (F_N^\T * D_{N_0})(\xi). \] By Lemma \ref{lem:nonnegative}, we can write \[ F_N^\T * D_{N_0} = \frac{1}{N} \sum_{n=0}^{N-1} D_n * D_{N_0}. \] It can be calculated directly that \[ D_n * D_{N_0} = D_{\min(n,N_0)}, \] or alternatively this can be seen by taking Fourier transforms of the identity \[ 1_{[-n,n]} 1_{[-N_0,N_0]} = 1_{[-\min(n,N_0),\min(n,N_0)]}. \] Therefore, \[ F_N^\T * D_{N_0} = \frac{1}{N} \sum_{n=0}^{N-1} D_n * D_{N_0} = \frac{N_0}{N} F_{N_0}^\T + \frac{N-N_0}{N} D_{N_0} \] and we can expand \begin{multline*}
    -\sum_{N_0<N} \Delta \psi(N_0) \frac{1}{N} \sum_{n \in [N_0]} F_N^\Z(n) \cos(2\pi\xi n) \\ = \frac{1}{N} \sum_{N_0<N} \Delta \psi(N_0) - \frac{1}{2N} \sum_{N_0<N} \Delta \psi(N_0) \frac{N_0}{N} F_{N_0}^\T(\xi)  - \frac{1}{2N} \sum_{N_0<N} \Delta \psi(N_0) \frac{N-N_0}{N} D_{N_0}(\xi).
\end{multline*} We can immediately bound \[ \frac{1}{N} \sum_{N_0<N} \Delta \psi(N_0) \gg -N^{-1} \] and \[ -\frac{1}{2N} \sum_{N_0<N} \Delta \psi(N_0) \frac{N_0}{N} F_{N_0}^\T(\xi) \geq 0 \] using Lemma \ref{lem:nonnegative}. Finally, write $\frac{N-N_0}{N} = F_N^\Z(N_0)$ and apply summation by parts to obtain \begin{multline*}
    \sum_{N_0<N} \Delta \psi(N_0) F_N^\Z(N_0) D_{N_0}(\xi) \\ = \Delta \psi(N-1) F_N^\Z(N-1) \sum_{N_0<N} D_{N_0}(\xi) - \sum_{N_0<N-1} \Delta(\Delta \psi(N_0) F_N^\Z(N_0)) \sum_{n \leq N_0} D_N(\xi),
\end{multline*} which we can simplify slightly to \[ \Delta \psi(N-1) F_N^\Z(N-1) NF_N^\T(\xi)  - \sum_{N_0<N-1} \Delta(\Delta \psi(N_0) F_N^\Z(N_0)) (N_0+1) F_{N_0+1}^\T(\xi) \] by Lemma \ref{lem:nonnegative}. By Lemma \ref{lem:nonnegative} again we can bound \[ \Delta \psi(N-1) F_N^\Z(N-1) N F_N^\T(\xi) \ll 1 \] and we can calculate \[ \Delta(\Delta \psi(N_0) F_N^\Z(N_0)) = F_N^\Z(N_0)\Delta^2 \psi(N_0) - \frac{1}{N} \Delta \psi(N_0+1) \gg -1 \] so \[ -\sum_{N_0<N-1} \Delta(\Delta \psi(N_0) F_N^\Z(N_0)) (N_0+1) F_{N_0+1}^\T(\xi) \ll 1 \] as well. Therefore, \[ -\frac{1}{2N} \sum_{N_0<N} \Delta \psi(N_0) \frac{N-N_0}{N_0} D_{N_0}(\xi) \gg -N^{-1}. \]

Putting everything together, we obtain (\ref{eqn:nonnegative}) and thus the proof of Theorem \ref{thm:mainthm} is complete.

\printbibliography
\end{document}